\theoremstyle{plain}
\newtheorem{theorem}{Theorem}[section]
\newtheorem{lemma}[theorem]{Lemma}
\newtheorem{proposition}[theorem]{Proposition}
\newtheorem{corollary}[theorem]{Corollary}
\newtheorem{Counter-example}[theorem]{Counter-example}
\newtheorem{remark}[theorem]{Remark}
\theoremstyle{definition}
\theoremstyle{remark}
\def\R{\mathbb R}
\def\S{\mathbb S}
\def\L{\mathbb L}
\def\E{\mathbb E}
\begin{document}

\title{A new approach to the study of spacelike submanifolds in a spherical Robertson-Walker spacetime:\\characterization of the stationary spacelike submanifolds as an application}

\date{}

\maketitle

\vspace*{-20mm}

\begin{center}
\Large D. Ferreira, E.A. Lima Jr., F.J. Palomo and A. Romero\footnote{The first author was partially supported by CAPES, Brazil. The  second  authors is partially supported by CNPq, Brazil, PQ-2 Grant 309668/2021-2.. The third author was partially
supported by Spanish MICINN project PID2020-118452GB-I00. The fourth named author was partially supported by the Spanish MICINN and ERDF project PID2020-116126GB-I00. The third and the fourth authors by the Andalusian and ERDF project A-FQM-494-UGR18. Research partially supported by the ``Mar\'{\i}a de Maeztu'' Excellence Unit IMAG, reference CEX2020-001105-M, funded by
MCIN-AEI-10.13039-501100011033.}

\end{center}

\thispagestyle{empty}

\begin{abstract}
A natural one codimension isometric embedding of each $(n+1)$-dimensional spherical Robertson-Walker (RW) spacetime $I\times_f \mathbb{S}^n$ in $(n+2)$-dimensional Lorentz-Minkowski spacetime
 $\mathbb{L}^{n+2}$ permits to contemplate $I\times_f \mathbb{S}^n$ as a rotation Lorentzian hypersurface of $\mathbb{L}^{n+2}$. After a detailed study of such Lorentzian hypersurfaces, any $k$-dimensional spacelike submanifold of such an RW spacetime can be contemplated as a spacelike submanifold of 
 $\mathbb{L}^{n+2}$. Then, we use that situation to study $k$-dimensional stationary (i.e., of zero mean curvature vector field) spacelike submanifolds of the RW spacetime. In particular, we prove a wide extension of the Lorentzian version of the classical Takahashi theorem, giving a characterization of stationary spacelike submanifolds of $I\times_f \mathbb{S}^n$ when contemplating them as spacelike submanifolds of $\mathbb{L}^{n+2}$. 
\end{abstract}

\section{Introduction}

For any isometric immersion $\Psi\colon M^{k} \to \R^{m}_{s}$  of a $k$-dimensional Riemannian manifold, $M^k$, in an $m$-dimensional semi-Euclidean space of arbitrary signature $s$, $0\leq s \leq m$, $\R^{m}_{s}$, the position vector field $\Psi$ is closely related to the extrinsic geometric of the immersion by means of the well-known Beltrami formula
\begin{equation}\label{Beltrami}
\Delta \Psi = k \mathbf{H},
\end{equation}
where $\Delta$ denotes the Laplacian operator on $M^k$ and $\mathbf{H}$ is the mean curvature vector field of $\Psi$.
This elegant and simple formula permits translate geometric assumptions on $\mathbf{H}$ into analytic ones on $\Psi$. For instance, $M^k$ is stationary, i.e., $\mathbf{H}=0$, if and only if the components of $\Psi$ are harmonic functions on $M^k$. Conversely, assumptions on $\Psi$ involving $\Delta$ are also translated to conditions on $\mathbf{H}$, for instance, Takahashi proved in 1966 that
\begin{quote}{\it 
If an isometric immersion $\Psi : M^k \to \R^m$ of a Riemannian manifold $M^k$ in Euclidean space $\R^m$ satisfies 
\begin{equation}\label{eigenvalue}
\Delta \Psi + \lambda \Psi =0,
\end{equation}
for some constant $\lambda \neq 0$, then $\lambda$ is necessarily positive, and $\Psi$ realizes a stationary immersion in an hypersphere $\S^{m-1}(\sqrt{k/\lambda})$ of radius $\sqrt{k/\lambda}$ in $\R^m$. Conversely, if $\Psi$ realizes a stationary immersion in a hypersphere  of radius $R$ in $\R^m$, then $\Psi$ satisfies (\ref{eigenvalue}) up to a parallel displacement in $\R^{m}$ and $\lambda=m/R^2$, {\rm \cite[Thm. 3]{Takahashi}}. 
}
\end{quote}

\vspace{1mm}

The extension of this result to spacelike submanifolds in Lorentz-Minkowski spacetime $\L ^m$ was obtained by Markvorsen, as a particular case of \cite[Thm. 1]{Markvorsen} and it reads as follows: 
\begin{quote}{\it 
If an isometric immersion $\Psi : M^k \to \L^m$ of a Riemannian manifold $M^k$ in Lorentz-Minkowski spacetime $\L^m$ satisfies (\ref{eigenvalue})
for some constant $\lambda >0$, then $\Psi$ realizes a stationary spacelike immersion in an $(m-1)$-dimensional De Sitter spacetime $\S_1^{m-1}(\sqrt{k/\lambda})$ of radius $\sqrt{k/\lambda}$ in $\L^m$. 
Conversely, if $\Psi$ realizes a stationary spacelike immersion in $\S_1^{m-1}(R)$, then $\Psi$ satisfies $(\ref{eigenvalue})$ up to a parallel displacement in $\L^{m}$ and $\lambda=m/R^2$
}
\end{quote}

\vspace{1mm}

An $(n+1)$-dimensional spherical Robertson-Walker (RW) spacetime $I\times_{f}\S^{n}$ is the product manifold $I\times \S^{n}$, where $I$ is an open interval of $\R$ and $\S^{n}$ the $n$-dimensional unit round sphere, endowed with the Lorentzian metric 
\begin{equation}\label{g^f}
	g^{f}=-\pi_{I}^{*}(ds^{2})+f(\pi)^{2}\pi^{*}(g),
\end{equation}
where $f>0$ is a smooth function on $I$, and $\pi_{I}$, $\pi$
denote the projections onto $I$ and $\S^{n}$, respectively, and $ds^2$ and $g$ are the usual Riemannian metrics on $I$ and $\S^{n}$, respectively. This Lorentzian manifold is a warped product, in the sense of \cite[Def. 7.33]{O'Neill} with base $(I,-ds^2)$, fibre $(\S^{n},g)$ and warping function $f$. As it is well-known,  an $(n+1)$-dimensional De Sitter spacetime $\S_1^{n+1}(R)$, of arbitrary radius $R>0$, can be seen as the spherical RW spacetime $\R\times_f\S^{n}$, where $f(t)=R\cosh (t/R)$, \cite{O'Neill}. On the other hand, the $(n+1)$-dimensional static Einstein spacetime, namely $ \R\times \S^{n}$ endowed with the Lorentzian metric (\ref{g^f}) where  $f=1$, is trivially a spherical RW spacetime.

\vspace{1mm}

Now, for a given spherical RW spacetime $I\times_{f}\S^{n}$, we assume $0\in I$ without loss of generality. Consider $h\in C^{\infty}(I)$ given by $h'(s)=\sqrt{1+f'(s)^2}>0$, for all $s\in I$ and $h(0)=0$. Take $J:=h(I)$ and $r:=f\circ h^{-1}$ that satisfies 
\begin{equation}\label{admissible}
r(t)>0 \;\; \text{and \; \,}|r'(t)|<1, 
\end{equation}
for all $t \in J$. 
Then, the map $\psi:I\times _{f}\S^{n}\to \L^{n+2}$ given by
\begin{equation}\label{isometric_embedding}
\psi(s,p)=\big(h(s), f(s)p\big),
\end{equation}
for any $(s,p)\in I\times _{f}\S^{n}$, is an isometric embedding, \cite{Akbar}, that allows us 
contemplate $I\times_f\S^{n}$ as a rotation Lorentzian hypersurface of $\L^{n+2}$. 

\vspace{1mm}

The main goal of this article is twofold. First of all, we describe carefully the geometry of the isometric embedding (\ref{isometric_embedding}), that permits think about  each spacelike submanifold of $I\times_f\S^{n}$ as a spacelike submanifold of $\L^{n+2}$. Secondly, as an application, we prove a Takahashi type result for spacelike submanifolds in an arbitrary spherical RW spacetime, that widely extends for the Lorentzian signature Markvorsen's previously quoted theorem.

\vspace{1mm}

The content of this paper is organized as follows. In Section 2, we identify $I\times_f\S^{n}$ to the rotation Lorentzian hypersurface $Q(r):=\psi(I\times_f\S^{n})\subset \L^{n+2}$, where $r$ is constructed from $f$ as above, in particular it satisfies (\ref{admissible}). Conversely, if we put 
\begin{equation}\label{hyperquadric}
Q(r)=\Big\{(t,x)\in J\times \E^{n+1}\;:\;  \sum_{i=1}^{n+1}x_{i}^{2}=\| x \|^2=r(t)^2\Big\}\subset \L^{n+2},
\end{equation}
where $r\in C^{\infty}(J)$, satisfies (\ref{admissible}) on an open interval $J\subset \R$, with $0\in J$. Then, we have $Q(r)=\psi(I\times_f\S^{n})\subset \L^{n+2}$, where the warping function $f$ is naturally obtained from $r$ reversing the previous construction of $r$ from $f$. To simplify notation, a function $r\in C^{\infty}(J)$ that satisfies (\ref{admissible}) will be called admissible in all following. On the other hand, since $Q(r)$ is isometric to $I\times_f\S^{n}$, all the intrinsic geometric properties of the spherical RW spacetime are automatically translated to $Q(r)$, in particular, we show that each Lorentzian hypersurface $Q(r)$ admits a timelike conformal and closed vector field, namely $\psi_{*}(f\,\partial /\partial t)$. This section ends with two remarks: the first one concerning the problem to find an isometric embedding of a given Lorentzian manifolds in some Lorentz-Minkowski spacetime, Remark \ref{relacion_con_MS}, 
and the second one, comparing our setting with the Riemannian case, Remark \ref{comparacion_con_caso_riemanniano}. 

\vspace{1mm}
 
Section 3 is devoted to study the extrinsic geometry of $Q(r)$. First of all, it is shown that $Q(r)$ is a rotation Lorentzian hypersurface of $\L^{n+2}$. Later, the corresponding fundamental formulae as stated, in particular, the Weingarten operator is explicitly obtained in Lemma \ref{quasiumbilical}. This result also shows that 
$Q(r)$ is actually quasiumbilical. The case totally umbilical is characterized in terms of a differential equation involving the function $r$, Remark \ref{2 de mayo}, whose solutions give rise to a De Sitter spacetime of arbitrary radius, Remark \ref{DeSitterQr}. Moreover, the fact that $Q(r)$ has proporcional principal curvatures \cite{CH} is characterized in Proposition \ref{ppc}.

\vspace{1mm}

In Section 4, we prove the announced application,   
Proposition \ref{lemma11} and Theorem \ref{010222}: 
\begin{quote}{\it 
If an isometric immersion $ \Psi\colon M^k \to \L^{n+2}$ of a Riemannian manifold $M^k$ in $\L^{n+2}$ satisfies
\begin{equation}\label{120322A}
\Delta \Psi +q_{\Psi_{0}}\,\mathbf{P}=0,
\end{equation}
where  $\mathbf{P}$ is the vector field along the immersion $\Psi=(\Psi_{0}, \Psi_{1}, \dots , \Psi_{n+1})$ given by
\begin{equation}\label{110322A}
\mathbf{P}=\big(r(\Psi_{0})r'(\Psi_{0}), \Psi_{1}, \dots , \Psi_{n+1}\big),
\end{equation}
and 
\begin{equation}\label{110322B}
q_{\Psi_{0}}=\linebreak\frac{k-\big(r''(\Psi_{0})r(\Psi_{0})+  r'(\Psi_{0})^{2}-1\big)\| \nabla \Psi_{0} \|^{2}}{r(\Psi_{0})^2(1- r'(\Psi_{0})^{2})}\in C^{\infty}(M^k)\,,
\end{equation}
for some admissible function $r$, i.e., when
the components of the spacelike immersion $\Psi\colon M^k \to \L^{n+1}$ satisfy
$$
\hspace*{-25mm}\Delta \Psi_{0}+q_{\Psi_{0}}\,r(\Psi_{0})\,r'(\Psi_{0})=0, 
$$
$$ 
\Delta \Psi_{i}+q_{\Psi_{0}}\, \Psi_{i}=0, \quad \quad i=1,2,\dots, n+1\,,
$$
then, $\Psi$ realizes a stationary spacelike immersion in $Q(r)$.
Conversely, if $\Psi$ realizes, for some admissible function $r$ with $q_{\Psi_{0}}>0$, a stationary spacelike immersion in the  Lorentzian hypersurface $Q(r)$, then, equation $(\ref{120322A})$ holds true. 
}
\end{quote}
It should be noticed that if $r(t)=\sqrt{1+t^2}$, for all $t\in \R$, $Q(r)$ is the unitary De Sitter spacetime  $ \S^{n+1}_{1}$, Remark \ref{251122}, then previous results specializes the aforementioned Markvorsen's result. 
On the other hand,
if $r(t)=1$, for all $t\in \R$, $Q(1)$ is the static Einstein spacetime  $\R\times \S^{n}$. Our result specializes now, Corollary \ref{caso_particular}  and Theorem \ref{010222}: 
\begin{quote}{\it
An isometric immersion $ \Psi\colon M^k \to \L^{n+2}$ of a Riemannian manifold $M^k$ in $\L^{n+2}$ satisfies
\begin{equation}\label{cylinder}
\Delta \Psi_{0}=0, \quad \; \Delta \Psi_{i}+(k+ \|\nabla \Psi_{0} \|^{2})\Psi_{i}=0,\; i=1,2,\dots, n+1,
\end{equation}
if and only if $\Psi$ realizes a stationary spacelike immersion in the static Einstein spacetime $Q(1)$,
}
\end{quote}
As an immediate consequence, the only compact stationary spacelike submanifolds in $Q(1)$ are the stationary submanifolds of a slice $\{t_0\}\times \S^{n}\equiv \S^{n}$.  

\vspace{1mm}

We also give several applications of Proposition \ref{lemma11} to physically realistic spherical RW spacetimes, Corollary \ref{3105B}.

\hyphenation{Lo-rent-zi-an}
\section{Preliminaries}
Let $\L^{n+2}$ be the $(n+2)$-dimensional Lorentz-Minkowski spacetime, that is, $\R^{n+2}$ endowed with the Lorentzian metric 
\begin{equation}\label{metric}
\langle\;\, ,\;\, \rangle =
-(dt)^{2}+(dx_{1})^{2}+(dx_{2})^{2}+...+(dx_{n+1})^{2}, 
\end{equation}
where
$(t,x_{1},x_{2},..., x_{n+1})=(t,x)\in \R\times \R^{n+1}$ are the usual coordinates of
$\R^{n+2}$. 

\vspace{1mm}

Given an $(n+1)$-dimensional spherical RW spacetime $I\times _{f}\S^{n}$,  let $h\in C^{\infty}(I)$ be defined by $h'(s)=\sqrt{1+f'(s)^2}>0$, for all $s\in I$ and $h(0)=0$. Take $J:=h(I)$ and $r:=f\circ h^{-1}>0$. It is clear that $r$ satisfies
\begin{equation}\label{RWEQ}
r'(t)=\frac{f'(s)}{\sqrt{1+f'(s)^2}} \quad \mathrm{ \rm and } \quad r''(t)=\frac{f''(s)}{(1+f'(s)^2)^2},
\end{equation}
for any $t=h(s)$. Thus, $r$ satisfies (\ref{admissible}) and it is an admissible function.  Clearly, we have (\ref{hyperquadric})
where $Q(r)=\psi (I\times _{f}\S^{n})$ and $\psi:I\times _{f}\S^{n}\to \L^{n+2}$ is the isometric embedding (\ref{isometric_embedding}) constructed in \cite{Akbar}. 

\vspace{1mm}

Conversely, if $Q(r)$ is defined by (\ref{hyperquadric}) for an admissible function $r$, then we consider $\widetilde{h} : J \to \R$ defined by $\widetilde{h}'(t)=\sqrt{1-r'(t)^2}>0$, $\widetilde{h}(0)=0$, for all $t\in J$. Thus, $\widetilde{h}$ is strictly increasing, and, therefore, exists an open interval $I$ of $\R$, with $0\in I$, such that $\widetilde{h} : J \to I$ is a diffeomorphism. Now define $f : I \to \R$ by $f(s)=(r\circ \widetilde{h}^{-1})(s)>0$, for all $s\in I$ and let $I\times _{f}\S^{n}$ be the corresponding spherical RW spacetime. If we put $h=\widetilde{h}^{-1} : I \to J$, then $h(0)=0$ and $h'(s)=1/\sqrt{1-r'(h(s))^2}=\sqrt{1+f'(s)^2}$, for all $s\in J$, thanks to (\ref{RWEQ}). Finally, we have $Q(r)=\psi (I\times _{f}\S^{n})$.

\begin{remark}\label{251122B}{\rm
Observe that if in the previous construction we replace $h'(s)=\sqrt{1+f'(s)^2}$ with $h'(s)=-\sqrt{1+f'(s)^2}$, then, the corresponding isometric embedding is congruent to the given in (\ref{isometric_embedding}).  
}	
\end{remark}

\begin{remark}\label{251122}
{\rm
For $r(t)=\sqrt{R^2+t^2}$, $t\in\R$, $R>0$ constant, the corresponding hypersurface $Q(r)$ is the $(n+1)$-dimensional De Sitter spacetime $\S^{n+1}_1(R)$ of radius $R$, hence of constant sectional curvature $1/R^2$. On the other hand, if $r=R>0$ then $Q(r)$ is the spherical static Einstein spacetime $\R\times \S^n(R)$. }
\end{remark}

The tangent space of $Q(r)$ at $(t,x)$ is given by
\begin{equation}\label{tangent_space}
T_{(t,x)}Q(r)=\Big\{(a,v)\in \R \times \R^{n+1}:-r(t)r'(t)a+\sum_{i=1}^{n+1}v_{i}x_{i}=0\Big\}=(r(t)r'(t),x)^{\perp}
\end{equation}
where $\perp$ denotes the orthogonal subspace in $\L^{n+2}$ of  the spacelike vector $(r(t)r'(t),x)$ for every $(t,x)\in Q(r)$. 

\vspace{1mm}

As a Lorentzian manifold, $Q(r)$ is time orientable. Indeed, the tangent vector field $T\in \mathfrak{X}(Q(r))$ given by
\begin{equation}\label{tangent}
T(t,x)=\frac{1}{\sqrt{1-r'(t)^2}}\Big(1,\frac{r'(t)}{r(t)}x\Big)
\end{equation} 
for every $(t,x)\in Q(r)$, satisfies $\langle T,T\rangle =-1$ everywhere on $Q(r)$. Precisely, $T$ is the normalization of the timelike conformal symmetry 
$K=\psi_{*}(f\,\partial/\partial t)\in \mathfrak{X}(Q(r))$, that is
\begin{equation}\label{closed_conformal_vector_field}
K(t,x)=\frac{1}{\sqrt{1-r'(t)^2}}\Big(r(t),r'(t)x\Big),
\end{equation}
for all $(t,x)\in Q(r)$. Therefore, it satisfies
\begin{equation}\label{closed_conformal_vector_field_2}
\overline{\nabla}_VK=\frac{r'(t)}{\sqrt{1-r'(t)^2}}\,V
\end{equation}
for any $V\in T_{(t,x)}Q(r)$. Thus, the vector field $K$ on $Q(r)$ is conformal with $\mathcal{L}_K\langle\;,\;\rangle =2\rho\,\langle\;,\;\rangle$, where $\rho (t,x)=r'(t)/\sqrt{1-r'(t)^2}$ and the metrically equivalent $1$-form to $K$ is closed.   

\vspace{1mm}

If for each $(t,x)\in Q(r)$ we set $\mathcal{D}_r(t,x)=K(t,x)^{\perp}$ then $\mathcal{D}_r$ is a distribution on $Q(r)$. Note that (\ref{closed_conformal_vector_field_2}) gives that $\mathcal{D}_r$ is integrable and each leaf $t=t_0$ is totally umbilical in $Q(r)$ with constant mean curvature. Moreover, $t=t_0$ is totally geodesic if and only if $r'(t_0)=0$. 

\vspace{1mm}

Obviously, $Q(r)$ has the same intrinsic geometry as that of the spherical RW spacetime from which it came.

\vspace{1mm}

\begin{remark}\label{relacion_con_MS}
{\rm A Lorentzian manifold $(M,g)$ admits an isometric embedding in an $N$-dimensional Lorentz-Minkowski spacetime $\mathbb{L}^N$ if and only if $(M,g)$ is a stably causal spacetime \cite[p. 63]{BEE} and admits $\tau\in C^{\infty}(M)$ such that $g(\nabla \tau,\nabla \tau)\leq -1$ \cite[Thm. 1.1]{MS}. Clearly, the function $\tau : I\times _{f}\S^{n}\rightarrow \mathbb{R}$, given by $\tau(t,p)=t$ is smooth and its gradient satisfies $\nabla \tau =-\partial/\partial t$. Therefore, the spacetime $I\times _{f}\S^{n}$ is stably causal \cite{BS}. Moreover, $g(\nabla \tau,\nabla \tau)=-1$ everywhere, thus $I\times _{f}\S^{n}$ lies under the assumptions of  \cite[Thm. 1.1]{MS} and hence, it is isometrically embeddable in $\mathbb{L}^N$, indeed, formula (\ref{isometric_embedding}) asserts that, in this case, $N=n+2$.
}
\end{remark} 

\begin{remark}\label{comparacion_con_caso_riemanniano}
{\rm Given $f\in C^{\infty}(I)$, $f>0$, consider now on $I\times \mathbb{S}^n$ the Riemannian metric $g_{f}:=\pi_{I}^{*}(ds^{2})+f(\pi)^{2}\pi^{*}(g),$ (compare with (\ref{g^f})). Thus, we have a Riemannian warped product $(I\times \mathbb{S}^n,g_f)$. The analogous construction to (\ref{isometric_embedding}) defines an isometric embedding from $(I\times \mathbb{R}^n,g_f)$ to $\R^{n+2}$ if and only if $h'(s)^2+f'(s)^2=1$, for all $s\in I$. 
Therefore, assume $|f'|<1$ and let $h\in C^{\infty}(I)$ given by $h'(s)=\sqrt{1-f'(s)^2}>0$ for all $s\in I$ and $h(0)=0$. Take as above $J:=h(I)$ and $r:=f\circ h^{-1}>0$. In this case, $r'(t)=f'(s)/\sqrt{1-f'(t)^2}$, where $h(s)=t$ (compare with (\ref{RWEQ})). Hence, the condition $|r'|<1$ does not hold here as in the Lorentzian case. Now, the map $\varphi : I \times \mathbb{S}^n \rightarrow \mathbb{R}^{n+2}$ defined by $\varphi(s,p)=(h(s), f(s)p)$, is an isometric embedding of $(I\times \mathbb{R}^n,g_f)$ in $\mathbb{R}^{n+2}$. Moreover $\varphi(I\times \mathbb{S}^n)=\{(t,x)\in J\times \mathbb{R}^{n+1}\, :\,\|x\|^2=r(t)^2\}$ is a rotation hypersurface in $\mathbb{R}^{n+2}$, \cite{doCD}.

}
\end{remark}

\section{Set up}

Now, from a extrinsic point of view, each $Q(r)$ is a rotation hypersurface of $\L^{n+2}$ in the terminology of \cite{doCD}. In fact, for a given admissible function $r\in C^{\infty}(J)$, let us consider the curve $\gamma : J \to \L^{n+2}$, given by $\gamma(t)=(t,r(t),\overbrace{0,..., 0}^{(n)})$. Note that the first assumption on $r$ in (\ref{admissible}) implies that the image of $\gamma$ does not meet the timelike axis $x_j=0$, $j=1,...,n+1$. On the other hand, the second one means that $\gamma$ is timelike. Denote now by $O^1(n+2)$ the group of linear isometries of $\L^{n+2}$ and by $G$ the subgroup $\{1\}\times O(n+1)$ of $O^1(n+2)$. Then, we have
\begin{equation}\label{rotation}
Q(r)=\{\,(A\circ \gamma)(t)\; :\; A\in G,\, t\in J\, \}\,.
\end{equation}
Thus, $Q(r)$ is a rotation hypersurface of $\L^{n+2}$ with profile curve $\gamma$ and rotation axis $x_j=0$, $j=1,...,n+1$. Note that if $E_t$ is the orthogonal hyperplane to the rotation axis through the point $(t,\overbrace{0,...,0}^{n+1})$, then $E_t$ is spacelike, and therefore, identifiable to the Euclidean space $\R^{n+1}$. Observe also that the slice $Q(r)\cap E_t$ is an $n$-dimensional round sphere in $E_t$ with radius $r(t)$.

\vspace{1mm}

From (\ref{tangent_space}) we have that the Lorentzian hypersurface $Q(r)$ of $\L^{n+2}$ admits a unit spacelike normal vector field $N\in \mathfrak{X}^{\perp}(Q(r))$, given  at $(t,x)\in Q(r)$ by 
\begin{equation}\label{normal}
N(t,x)=\frac{1}{r(t)\sqrt{1-r'(t)^2}}\Big(r'(t)r(t),x\Big)\,.
\end{equation}

\vspace{1mm}

Now, let us denote by $\nabla^0$ and $\overline\nabla$ the Levi-Civita connections of $\L^{n+2}$ and $Q(r)$, respectively.
For any $V,W\in\mathfrak{X}(Q(r))$, the Gauss and Weingarten formulae of $Q(r)\subset \L^{n+2}$ are respectively written as
\begin{equation}\label{Gauss_formula}
\nabla^0_VW=\overline\nabla_VW+\langle AV,W\rangle N,
\end{equation}
\begin{equation}\label{Weingarten_formula}
\hspace*{-21mm}\nabla^0_VN=-A(V),
\end{equation}
where $A$ is the Weingarten operator with respect to $N$, that is explicitly given by the following result.

\begin{lemma}\label{quasiumbilical}
The Weingarten operator $A$ with respect to $N$ of the Lorentzian hypersurface $Q(r)\subset \L^{n+2}$ is given by
\begin{equation}\label{Weingarten} 
 A(V)=\alpha(t)\, V+\beta(t)\, \big\langle T(t,x), V\big\rangle\, T(t,x)
 \end{equation}
 for all $V\in T_{(t,x)}Q(r)$, where
 \begin{equation}\label{coefficients}
\alpha(t)=\frac{-1}{r(t)\sqrt{1-r'(t)^2}}, \quad  \beta(t)=\frac{r''(t) r(t)+r'(t)^2 -1}{r(t)(1-r'(t)^2)^{3/2}}\,.
\end{equation}
\end{lemma}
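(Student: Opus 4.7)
The plan is to use the Weingarten formula $A(V)=-\nabla^{0}_{V}N$, where $\nabla^{0}$ is the flat connection of $\L^{n+2}$, so the computation reduces to directly differentiating the explicit expression
$$N(t,x)=\phi(t)\big(r'(t)r(t),\,x\big),\qquad \phi(t)=\frac{1}{r(t)\sqrt{1-r'(t)^{2}}}.$$
For any $V=(a,v)\in T_{(t,x)}Q(r)$, since $\nabla^{0}$ is ordinary directional differentiation in Cartesian coordinates,
$$\nabla^{0}_{V}N=a\,\phi'(t)\,(r'r,\,x)+\phi(t)\,\big(a(r''r+r'^{2}),\,v\big),$$
and the remaining task is to re-express this ambient vector intrinsically in terms of $V$, $T$, and $N$.

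The useful identities are $\sqrt{1-r'^{2}}\,T=(1,\,r'x/r)$ and $\sqrt{1-r'^{2}}\,N=(r',\,x/r)$, from which a linear combination produces $e_{0}=(T-r'N)/\sqrt{1-r'^{2}}$, with $e_{0}$ the first standard basis vector of $\R^{n+2}$. I would then work through the orthogonal decomposition $T_{(t,x)}Q(r)=\R T\oplus T^{\perp}$, observing that a short computation using the tangency condition $-rr'a+\langle v,x\rangle=0$ gives $\langle V,T\rangle=-a\sqrt{1-r'^{2}}$, so $V\in T^{\perp}$ corresponds exactly to $a=0$. For such $V$ the $\phi'$-term drops out and the formula collapses to $\nabla^{0}_{V}N=\phi(t)\,V$, giving $A(V)=\alpha(t)\,V$, which agrees with the claim since $\langle T,V\rangle=0$.

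For the remaining direction $V=T$, I would substitute $a=1/\sqrt{1-r'^{2}}$ and $v=r'x/(r\sqrt{1-r'^{2}})$, expand $(r''r+r'^{2},\,r'x/r)=(r''r+r'^{2}-1)e_{0}+\sqrt{1-r'^{2}}\,T$, and then replace $e_{0}$ by its expression in terms of $T$ and $N$. After computing $\phi'(t)$ explicitly, the two contributions to the $N$-component cancel (as they must, since $\langle N,N\rangle=1$), leaving $\nabla^{0}_{T}N=\big(r''/(1-r'^{2})^{3/2}\big)T$. Since $\langle T,T\rangle=-1$, a direct check yields $A(T)=(\alpha(t)-\beta(t))T=\alpha(t)T+\beta(t)\langle T,T\rangle T$, matching the claimed formula on $T$. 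By linearity of $A$ and the decomposition $V=-\langle V,T\rangle T+V^{\perp}$ with $V^{\perp}\in T^{\perp}$, the two cases combine into $A(V)=\alpha(t)V+\beta(t)\langle T,V\rangle T$.

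The main obstacle is purely computational: the delicate cancellation in the $T$-case. One must compute $\phi'(t)$ explicitly, recognize the combination $rr''+r'^{2}-1$ as the natural ingredient in its numerator, and verify that the $N$-coefficient coming from $r\phi'(t)\,N$ exactly cancels the one produced by the expansion of $(r''r+r'^{2},\,r'x/r)$; the surviving $T$-coefficient then automatically gives the scalar in $\beta(t)$ after simplifying $\alpha-\beta$ to $-r''/(1-r'^{2})^{3/2}$.
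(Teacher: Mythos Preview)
Your proof is correct and follows essentially the same route as the paper: both compute $A(V)=-\nabla^{0}_{V}N$ by directly differentiating the explicit formula for $N$, then split into the cases $V\perp T$ (equivalently $a=0$) and $V=T$, obtaining $A(V)=\alpha V$ and $A(T)=(\alpha-\beta)T$ respectively, and conclude by linearity. The only cosmetic difference is that you package the $T$-case using the identity $e_{0}=(T-r'N)/\sqrt{1-r'^{2}}$ and an a priori observation that the $N$-component must vanish, whereas the paper carries out the same cancellation by direct expansion in ambient coordinates.
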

\begin{proof}
Write $V=(a,v)\in T_{(t_0,x_0)}Q(r)$,  i.e., $a\,r'(t_0)r(t_0)=\langle x_{0}, v\rangle$, and consider a curve $s \mapsto (t(s),x(s))$ in $Q(r)$ such that $(t(0),x(0))=(t_0,x_0)$ and $(t'(0),x'(0))=(a,v)$. 

\vspace{1mm}

Using (\ref{normal}), we have,
$$
A(V)=-\frac{d}{ds}\Big{|}_{s=0}N(t(s), x(s))=
$$
\begin{equation}\label{100222A}
\hspace*{18mm}-a\frac{r'(t_{0})}{r(t_{0})}\beta(t_{0})\Big(r'(t_{0})r(t_{0}),x_{0}\Big)
\end{equation}
$$
\hspace*{27mm}+\alpha(t_{0})\Big( a[r''(t_{0})r(t_{0})+ r'(t_{0})^2], v\Big).
$$
First of all, suppose that $\langle T(t_0,x_0), V\rangle=0$. Then, from $ar(t_0)=r'(t_0)\langle x_{0}, v\rangle=ar'(t_0)^{2}r(t_0)$ we obtain  $a=0$. Hence, equation (\ref{100222A})
reduces to
$$
A(V)=\alpha(t_{0})V.
$$
Now, for the case $V=T(t_{0},x_{0})$, we have from (\ref{tangent}) and (\ref{100222A}),
$$
\hspace*{-28mm}A(T(t_{0},x_{0}))=
\frac{-r'(t_{0})}{r(t_{0})\sqrt{1-r'(t_{0})^2}}\beta(t_{0})\Big(r'(t_{0})r(t_{0}),x_{0}\Big)
$$
$$
\hspace*{2mm}-\beta(t_{0})\sqrt{1-r'(t_{0})^2}\Big(1, 0\Big)+\alpha(t_{0})T(t_{0},x_{0})=
$$
$$
\hspace*{34mm}=-\beta(t_{0})\Big(\frac{r'(t_{0})^2}{\sqrt{1-r'(t_{0})^2}}+\sqrt{1-r'(t_{0})^2},\frac{r'(t_{0})}{r(t_{0})\sqrt{1-r'(t_{0})^2}}x_{0}\Big)
$$
$$
+\alpha(t_{0})T(t_{0},x_{0})=(\alpha(t_{0})-\beta(t_{0}))T(t_{0},x_{0})\,,
$$
which ends the proof.
\end{proof}

\smallskip

\begin{remark}\label{2 de mayo}
{\rm
Now we are in position to analyze the extrinsic geometry of $Q(r)$. First, let us observe that, obviously, $Q(r)$ is not totally geodesic for any admissible function $r$. On the other hand, Lemma \ref{quasiumbilical} says that $Q(r)$ is quasiumbilical \cite{Chen}. Observe that thanks to (\ref{Weingarten}) we have 
$$
\begin{cases}
A(T)=\big(\alpha(t)-\beta(t)\big)T, and\; \\
A(V)=\alpha(t)V,\; \text{for any}\; V\in T_{(t,x)}Q(r)\; \text{orthogonal to} \;T(t,x). 
\end{cases}
$$
Note that $A$ is diagonalizable at any point (recall that a self-adjoint operator respect to a Lorentzian scalar product is not necessarily diagonalizable).
Therefore, $Q(r)$ is totally umbilical if and only if $\beta(t)=0$ everywhere on $Q(r)$, i.e., if and only if the function $r$ satisfies the differential equation $r(t)r''(t)+ r'(t)^2=1$ or equivalently $\frac{d^2}{dt^2}\,r(t)^2=2$. Consequently,
$r(t)^{2}=t^{2}+at+b$,
where $b=r(0)^2>0$ since $r>0$, and $a=2r(0)r'(0)$ with $a^2 <4b$, making use of $|r'|<1$.
Summarizing, we have obtained that $Q(r)$ is totally umbilical if and only if
\begin{equation}\label{totally_umbilical}
r(t)=\sqrt{t^2+at+b}\,,\; \text{where the constants satisfy}\; b>0\; \text{and}\; a^2<4b.  
\end{equation}
}
\end{remark}

\begin{remark}\label{DeSitterQr}
{\rm If $Q(r)$ is totally umbilical then $\alpha(t,x)=\frac{-2}{\sqrt{4b-a^2}}$, and, therefore, the Weingarten operator is $A=\frac{-2}{\sqrt{4b-a^2}}\,I$, where $I$ denotes the identity operator, everywhere.  In this case,  the Gauss equation of $Q(r)$ in $\L^{n+2}$ gives for the curvature tensor $R$ of $Q(r)$, the following expression $R(X,Y)Z=\big(\frac{4}{4b-a^2}\big)\{\langle Y,Z\rangle X - \langle X,Z\rangle Y\}$, i.e., $Q(r)$ has sectional curvature $\frac{4}{4b-a^2}$, indeed, $Q(r)$ is, up to a translation, the $(n+1)$-dimensional De Sitter spacetime $\S^{n+1}_1(\frac{\sqrt{4b-a^2}}{2})$.  }
\end{remark}

From Lemma \ref{quasiumbilical}, the mean curvature function of $Q(r)$ with respect to $N$, $H:=\frac{1}{n+1}\,\text{trace}(A)$, satisfies
\begin{equation}\label{mean_curvature}
H(t,x)=\alpha(t)-\frac{\beta(t)}{n+1}=-\frac{n(1-r'(t)^2)+r''(t)r(t)}{(n+1)r(t)\big(1- r'(t)^2\big)^{3/2}}\,.
\end{equation}

\begin{remark}\label{relation_with_Rafael_Lopez}{\rm In the case $n=1$, formula (\ref{mean_curvature}) agrees, up the sign of $H$ due to our choice of $N$, with \cite[eq. (8)]{Rafael_Lopez}. Therefore, if $H$ is constant then 
$$
\frac{r(t)}{\sqrt{1-r'(t)^2}}+r(t)^2H=\text{constant}\,,
$$
for any $t\in J$, i.e., we have \cite[eq. (9)]{Rafael_Lopez}. However, for $n>1$ no extension of this fact holds, and we have to make a different reasoning.}

\end{remark}
Now, taking into account
$$
\alpha'(t)=-\frac{r'(t)}{r(t)}\,\beta(t), 
$$
we get from (\ref{mean_curvature})

\begin{corollary}
The Lorentzian hypersurface $Q(r)$ of $\L^{n+2}$ has constant mean curvature if and only if there exists $c\in \R$ such that
\begin{equation}\label{constant_mean_curvature}
\beta(t)=\frac{c}{r^{n+1}(t)}\,, 
\end{equation}
equivalently
\begin{equation}\label{constant_mean_curvature_equiv}
\frac{r(t)^n\big(r''(t) r(t)+r'(t)^2 -1\big)}{(1-r'(t)^2)^{3/2}}=c 
\end{equation}
 for all $t\in J$. In this case, we have
 \begin{equation}\label{relation}
 \alpha(t)=\frac{c}{(n+1)r^{n+1}(t)}+H\,. 
 \end{equation}
\end{corollary}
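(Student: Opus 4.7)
The strategy is to differentiate the mean curvature formula (\ref{mean_curvature}) and convert the resulting condition into a separable first order ODE for $\beta(t)$, using the identity $\alpha'(t)=-(r'(t)/r(t))\,\beta(t)$ just displayed before the corollary.

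Concretely, I would first rewrite (\ref{mean_curvature}) as
\begin{equation*}
(n+1)\,H(t,x)\,=\,(n+1)\,\alpha(t)\,-\,\beta(t).
\end{equation*}
Since $\alpha$ and $\beta$ depend only on $t$, the mean curvature function is automatically constant along each slice $t=\text{const}$ of $Q(r)$; hence $H$ is constant on $Q(r)$ if and only if the function of one variable $(n+1)\alpha-\beta$ is constant on $J$, i.e.\ if and only if
\begin{equation*}
(n+1)\,\alpha'(t)\,=\,\beta'(t)\quad\text{for every }t\in J.
\end{equation*}
Substituting $\alpha'(t)=-(r'(t)/r(t))\,\beta(t)$ this becomes the linear first order ODE
\begin{equation*}
\beta'(t)\,+\,(n+1)\,\frac{r'(t)}{r(t)}\,\beta(t)\,=\,0.
\end{equation*}

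Next, I would solve this ODE. Multiplying by the integrating factor $r(t)^{n+1}$ turns it into $\frac{d}{dt}\bigl(r(t)^{n+1}\beta(t)\bigr)=0$, so $r(t)^{n+1}\beta(t)=c$ for some constant $c\in\R$, which is precisely (\ref{constant_mean_curvature}). (The case $\beta\equiv 0$ is covered by $c=0$.) Substituting the explicit expression of $\beta(t)$ from (\ref{coefficients}) yields the equivalent form (\ref{constant_mean_curvature_equiv}).

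Finally, assuming $H$ constant and $\beta(t)=c/r(t)^{n+1}$, the identity $(n+1)H=(n+1)\alpha(t)-\beta(t)$ gives immediately
\begin{equation*}
\alpha(t)\,=\,H\,+\,\frac{c}{(n+1)\,r(t)^{n+1}},
\end{equation*}
which is (\ref{relation}). There is no real obstacle here: once one observes that $H$ depends only on $t$ (so constancy reduces to an ODE in one variable) and one has the formula for $\alpha'$ in terms of $\beta$, the derivation is a routine integration; the only thing to be careful about is not to assume $\beta(t)\neq0$ when separating variables, which is why I prefer the integrating factor form of the ODE.
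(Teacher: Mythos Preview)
Your proof is correct and follows exactly the route the paper intends: the paper merely records the identity $\alpha'(t)=-(r'(t)/r(t))\beta(t)$ and the formula $(n+1)H=(n+1)\alpha-\beta$ and leaves the rest to the reader, and you have filled in precisely those details (differentiate, substitute, integrate via the factor $r^{n+1}$). There is nothing to add.
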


\begin{remark}
{\rm Clearly, for the choice $c=0$ we have the totally umbilical case. On the other hand, for each $c<0$  we have that $r=\sqrt[n]{-c}$ is a solution of (\ref{constant_mean_curvature_equiv}) giving $Q(r)=\R\times \S^{n}(r)$ which has constant mean curvature but it is not totally umbilical.}
\end{remark}
\smallskip
 Extending \cite{CH} we are going to explore which Lorentzian hypersurfaces of the family $Q(r)$ 
have the property that the principal curvature in the axial direction is a constant multiple of the common value of the principal curvatures in the rotational directions.  

\vspace{1mm}

The Lorentzian hypersurface $Q(r)$ of $\mathbb{L}^{n+2}$ has proportional principal curvatures when there exists $\lambda\in \R$ such that
\begin{equation}\label{ppc_def}
\lambda\, \alpha(t)=\alpha(t)-\beta(t),
\end{equation}
for all $t\in J$.
If $Q(r)$ has proportional principal curvatures, then it is totally umbilical when $\lambda=1$ and, using (\ref{mean_curvature}), it has zero mean curvature when $\lambda=-n$.

\vspace{1mm}

As a direct consequence of formulae (\ref{coefficients}) we have,
\begin{proposition}\label{ppc}
The Lorentzian hypersurface $Q(r)$ of $\mathbb{L}^{n+2}$ has proportional principal curvatures, i.e., it satisfies $(\ref{ppc_def})$ if and only if
\begin{equation}\label{190422A}
r(t)r''(t)=\lambda (1- r'(t)^{2})
\end{equation}
for all $t\in J$.
\end{proposition}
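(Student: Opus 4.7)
The proof is essentially a direct algebraic manipulation of the explicit formulae for $\alpha(t)$ and $\beta(t)$ obtained in Lemma \ref{quasiumbilical}; no geometric input beyond that lemma is needed. The plan is to rewrite the defining equation $(\ref{ppc_def})$ in the form $\alpha(t)-\beta(t)=\lambda\,\alpha(t)$, compute the left-hand side in closed form, and observe that all denominators admit a common factorisation that produces $(\ref{190422A})$ after multiplying out.

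Concretely, I would proceed as follows. First, recalling from $(\ref{coefficients})$ that
$$
\alpha(t)=\frac{-1}{r(t)\sqrt{1-r'(t)^2}},\qquad \beta(t)=\frac{r''(t)r(t)+r'(t)^2-1}{r(t)\bigl(1-r'(t)^2\bigr)^{3/2}},
$$
I would place $\alpha(t)-\beta(t)$ over the common denominator $r(t)\bigl(1-r'(t)^2\bigr)^{3/2}$. The numerator simplifies as
$$
-\bigl(1-r'(t)^2\bigr)-\bigl(r''(t)r(t)+r'(t)^2-1\bigr)=-r''(t)r(t),
$$
so after cancelling the factor $r(t)$ we obtain the clean identity
$$
\alpha(t)-\beta(t)=\frac{-r''(t)}{\bigl(1-r'(t)^2\bigr)^{3/2}}.
$$
This is the key computation; everything else is bookkeeping.

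Second, substituting this expression together with the formula for $\alpha(t)$ into $(\ref{ppc_def})$ transforms that condition into
$$
\frac{-\lambda}{r(t)\sqrt{1-r'(t)^2}}=\frac{-r''(t)}{\bigl(1-r'(t)^2\bigr)^{3/2}}.
$$
Multiplying both sides by $-r(t)\bigl(1-r'(t)^2\bigr)^{3/2}$, which is a strictly positive quantity by admissibility of $r$ (see $(\ref{admissible})$), yields exactly $(\ref{190422A})$. Since every step is an equivalence (no quantity that is potentially zero has been divided out), the characterisation is an ``if and only if''.

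There is no real obstacle here: the entire argument reduces to a short computation, and the only mild subtlety is spotting that $\alpha(t)-\beta(t)$ collapses to a single-term expression once the numerator of the difference is expanded. The positivity of $r$ and the strict inequality $|r'|<1$ ensure that the algebraic manipulations are valid pointwise on $J$, securing the equivalence between the geometric condition and the ordinary differential equation $(\ref{190422A})$.
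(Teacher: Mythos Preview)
Your proof is correct and is precisely the computation the paper has in mind: the paper offers no explicit proof, stating only that the result is ``a direct consequence of formulae $(\ref{coefficients})$'', and your argument fills in exactly those details. The key simplification $\alpha(t)-\beta(t)=-r''(t)/(1-r'(t)^2)^{3/2}$ and the subsequent clearing of the (nowhere-vanishing) denominators are the intended steps.
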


\begin{remark}
{\rm An analogous family of differential equations to (\ref{190422A}) appears in \cite{CH} in the study of  hypersurfaces of revolution with proportional principal curvatures in Euclidean spaces, giving the corresponding  solutions in  \cite[Thm. 1]{CH}.
In our setting,  the solutions of (\ref{190422A}) for some choices of $\lambda \in \R$ do not provide Lorentzian hypersurfaces of $\L^{n+2}$. 
Indeed for the choice $\lambda=-1$ the set of such solutions of (\ref{190422A}) is given by 
$$
r(t)=\frac{1}{b}\sinh(bt+c)
$$
where $b\neq 0$, $c\in \R$ and $t>-c/b$. Since the condition $r'(t)^2<1$, for all $t\in J$ does not hold,  the corresponding hypersurface does not inherit 
a Lorentzian metric from $\L^{n+2}$.

}
\end{remark}

\section{Stationary spacelike submanifolds} 
Let $\Psi : M^k \to I\times_f \S^n$ be a spacelike immersion of a spherical RW spacetime  $I\times_f \S^n$. After identifying $\Psi$ to $\psi \circ \Psi$, for a suitable admissible function $r$, we can consider 
\begin{equation}\label{Psi}
\Psi : M^k \to Q(r)\subset  \L^{n+2}.
\end{equation}
Now, let  $\sigma$ and $\tilde \sigma$ be the second fundamental forms of  $\Psi : M^k \to \L^{n+2}$ and $\Psi : M^k \to Q(r)$, respectively. From (\ref{Gauss_formula}) we have
\begin{equation}\label{second_fundamental_form}
\sigma(X,Y)=\tilde \sigma (X,Y) \,+\, \langle AX,Y \rangle N\,,
\end{equation}
for all $X,Y\in \mathfrak{X}(M^k)$, where $N$, given in  (\ref{normal}), is the unit normal vector field of $Q(r)$ in $\L^{n+2}$ and $A$ its corresponding Weingarten operator (\ref{Weingarten}). Consequently, the respective mean curvature vector fields 
$\mathbf{H}$ and $\widetilde{\mathbf{H}}$ are related by
\smallskip

\begin{equation}\label{mean_curvatures_relation}
\mathbf{H}=\widetilde{\mathbf{H}}\, +\,\frac{1}{k}\,\sum_{i=1}^k\,\langle AX_i,X_i \rangle N\,,
\end{equation}
where $\{X_i\}$ is a local orthonormal basis of tangent vector fields to $M^k$. 
Then, a direct computation from Lemma \ref{quasiumbilical} gives
that $\widetilde{\mathbf{H}}=0$ if and only if 
\begin{equation}
\mathbf{H}=\Big(\alpha(\Psi_0 )\,+\, \frac{\beta(\Psi_0 )}{k}\, \|T^{\top} \|^{2}\Big) \,N,
\end{equation}
where $T^{\top}$ denotes the tangential component of $T$ on $M^k$.

\vspace{1mm}

By means of the Beltrami equation (\ref{Beltrami}), we have then
\begin{equation}
\Delta \Psi=\Big(k\,\alpha(\Psi_0)\,+\, \beta(\Psi_0 )\, \|T^{\top} \|^{2}\Big)\, N\,.
\end{equation}
Now, for any local orthonormal tangent frame $\{X_1,...,X_k\}$, we compute
\begin{align*}
\|T^{\top} \|^{2} &=\frac{1}{1- r'(\Psi_{0})^{2}}\sum_{i=1}^{k}\Big\langle \Big(1, \frac{r'(\Psi_{0})}{r(\Psi_{0})}\Psi_{1}, \dots ,\frac{r'(\Psi_{0})}{r(\Psi_{0})}\Psi_{n+1}\Big), \Big(X_{i}(\Psi_{0}),\dots , X_{i}(\Psi_{n+1})\Big) \Big\rangle^{2}\\[2mm]
&=\frac{1}{1- r'(\Psi_{0})^{2}}\sum_{i=1}^{k}\Big(-X_{i}(\Psi_{0})+  \frac{r'(\Psi_{0})}{r(\Psi_{0})}\Psi_{1}X_{i}(\Psi_{1})+\dots +\frac{r'(\Psi_{0})}{r(\Psi_{0})}\Psi_{n+1}X_{i}(\Psi_{n+1}) \Big)^{2}\\[2mm]
&=\frac{1}{1- r'(\Psi_{0})^{2}}\sum_{i=1}^{k}\Big(-X_{i}(\Psi_{0})+  r'(\Psi_{0})^{2}X_{i}(\Psi_{0})\Big)^{2}\\[2mm]
&=(1- r'(\Psi_{0})^{2})\sum_{i=1}^{k}(X_{i}(\Psi_{0}))^{2}\\[2mm]
&=(1- r'(\Psi_{0})^{2})\| \nabla \Psi_{0} \|^{2}.
\end{align*}
Therefore, we get
$$
k\,\alpha(\Psi_0 )+ \beta(\Psi_0 )\, \|T^{\top} \|^{2}=\alpha(\Psi_0 )\,\Big[k-\Big(r''(\Psi_{0})r(\Psi_{0})+  r'(\Psi_{0})^{2}-1\Big)\| \nabla \Psi_{0} \|^{2}\Big]\,.
$$

We summarize previous computations as follows,
\begin{proposition}\label{lemma11}
A spacelike immersion $ \Psi\colon M^k \to Q(r)$ is stationary if and only if
\begin{equation}\label{lemma11_formula}
\Delta \Psi=\alpha(\Psi_0 )\,\Big[k-\Big(r''(\Psi_{0})r(\Psi_{0})+  r'(\Psi_{0})^{2}-1\Big)\| \nabla \Psi_{0} \|^{2}\Big]\,N.
\end{equation}
Equivalently, $\Psi$ is stationary if and only if 
$$
\Delta \Psi +q_{\Psi_{0}}\,\mathbf{P}=0,
$$
as announced in $(\ref{120322A})$, where  $\mathbf{P}$ is the vector field along the immersion $\Psi$ given in $(\ref{110322A})$
and 
$q_{\Psi_{0}}$ the function on $M^k$ given in $(\ref{110322B})$.

\end{proposition}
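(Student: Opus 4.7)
The plan is to start from the Beltrami formula \eqref{Beltrami} applied to $\Psi$ regarded as an immersion into $\L^{n+2}$, namely $\Delta\Psi=k\mathbf{H}$, and to decompose the ambient mean curvature vector $\mathbf{H}$ into the intrinsic mean curvature vector $\widetilde{\mathbf{H}}$ of $\Psi\colon M^k\to Q(r)$ plus a normal contribution along $N$. By \eqref{mean_curvatures_relation},
$$
\mathbf{H}=\widetilde{\mathbf{H}}+\frac{1}{k}\Bigl(\sum_{i=1}^{k}\langle AX_i,X_i\rangle\Bigr)N,
$$
so stationarity $\widetilde{\mathbf{H}}=0$ reduces the problem to computing the trace of $A$ restricted to $T_pM^k$ along an orthonormal frame $\{X_i\}$, and then to rewriting the resulting scalar multiple of $N$ as $-q_{\Psi_0}\mathbf{P}$.

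For the trace, I would invoke Lemma \ref{quasiumbilical}: since $A(V)=\alpha(\Psi_0)V+\beta(\Psi_0)\langle T,V\rangle T$, one immediately obtains
$$
\sum_{i=1}^{k}\langle AX_i,X_i\rangle=k\,\alpha(\Psi_0)+\beta(\Psi_0)\,\|T^{\top}\|^{2},
$$
where $T^{\top}$ is the component of $T$ tangent to $M^k$. Everything now depends on evaluating $\|T^{\top}\|^{2}$ at points of $\Psi(M^k)$.

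Using the explicit form \eqref{tangent} of $T$ I would write
$$
\langle T,X_i\rangle=\frac{1}{\sqrt{1-r'(\Psi_0)^2}}\Bigl[-X_i(\Psi_0)+\frac{r'(\Psi_0)}{r(\Psi_0)}\sum_{j=1}^{n+1}\Psi_j\,X_i(\Psi_j)\Bigr].
$$
Since $\Psi$ takes values in $Q(r)$, differentiating the defining identity $\sum_{j=1}^{n+1}\Psi_j^{\,2}=r(\Psi_0)^{2}$ along $X_i$ yields $\sum_j\Psi_jX_i(\Psi_j)=r(\Psi_0)r'(\Psi_0)X_i(\Psi_0)$. Substituting this reduces the bracket to $-(1-r'(\Psi_0)^2)X_i(\Psi_0)$, so $\langle T,X_i\rangle=-\sqrt{1-r'(\Psi_0)^2}\,X_i(\Psi_0)$, and therefore $\|T^{\top}\|^{2}=(1-r'(\Psi_0)^2)\,\|\nabla\Psi_0\|^{2}$. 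Plugging this into the trace expression and using \eqref{coefficients} to recognize that $\beta(\Psi_0)(1-r'(\Psi_0)^2)=-\alpha(\Psi_0)\bigl(r''(\Psi_0)r(\Psi_0)+r'(\Psi_0)^2-1\bigr)$, the Beltrami formula then delivers \eqref{lemma11_formula} directly.

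To complete the translation into the $\mathbf{P}$-form, I would observe that \eqref{normal} along $\Psi$ reads $N=\frac{1}{r(\Psi_0)\sqrt{1-r'(\Psi_0)^2}}\,\mathbf{P}$, with $\mathbf{P}$ as in \eqref{110322A}. Combining this with $\alpha(\Psi_0)=-1/(r(\Psi_0)\sqrt{1-r'(\Psi_0)^2})$ produces the factor $-1/\bigl(r(\Psi_0)^2(1-r'(\Psi_0)^2)\bigr)$ in front of the bracket appearing in \eqref{lemma11_formula}, and comparison with \eqref{110322B} identifies the result as $-q_{\Psi_0}\,\mathbf{P}$. No serious difficulty is expected; the delicate steps are the short orthogonality identity that reduces $\|T^{\top}\|^{2}$ to a multiple of $\|\nabla\Psi_0\|^{2}$, and the careful bookkeeping of the factors $1-r'(\Psi_0)^2$ that mediate between the scalar-times-$N$ form and the $\mathbf{P}$-form.
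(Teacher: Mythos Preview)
Your proposal is correct and follows essentially the same route as the paper: decompose $\mathbf{H}$ via \eqref{mean_curvatures_relation}, use Lemma~\ref{quasiumbilical} to reduce the restricted trace of $A$ to $k\,\alpha(\Psi_0)+\beta(\Psi_0)\|T^{\top}\|^{2}$, and compute $\|T^{\top}\|^{2}=(1-r'(\Psi_0)^2)\|\nabla\Psi_0\|^{2}$ by differentiating the defining relation of $Q(r)$. Your explicit rewriting of the result in the $\mathbf{P}$-form via $N=\bigl(r(\Psi_0)\sqrt{1-r'(\Psi_0)^2}\bigr)^{-1}\mathbf{P}$ is a step the paper leaves implicit, so if anything your argument is slightly more detailed there.
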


\begin{remark}\label{sabado}
{\rm The previous result gives rise to the following system of elliptic partial differential equations for the components of the spacelike immersion $\Psi\colon M^k \to \L^{n+1}$\,,
$$
\begin{cases} \Delta \Psi_{0}+q_{\Psi_{0}}\,r(\Psi_{0})\,r'(\Psi_{0})=0, \\[2.5mm] \Delta \Psi_{i}+q_{\Psi_{0}}\, \Psi_{i}=0, \quad \quad i=1,2,\dots, n+1.
\end{cases}
$$
}
\end{remark}

Assume $r(t)=\sqrt{1+t^2}$ for all $t\in \R$. In this case $Q(r)$ is isometric to the unitary De Sitter spacetime  $ \S^{n+1}_{1}$. A spacelike immersion  $\Psi \colon M^{k}\rightarrow Q(r)\subset \L^{n+2}$ is stationary if and only if
\begin{equation}\label{case De Sitter}
\Delta \Psi+k \, \Psi=0.
\end{equation}

 On the other hand, if $r(t)=1$ for all $t\in \R$, then $Q(1)$ is isometric to the static Einstein spacetime   $\R\times \S^{n}$. Hence, as a direct consequence of Proposition \ref{lemma11}, we obtain,
 \begin{corollary}\label{caso_particular}
 A spacelike immersion $\Psi\colon M^{k}\rightarrow Q(1)\subset \L^{n+2}$ is stationary if and only if
\begin{equation}\label{case Q(1)}
\Delta \Psi_{0}=0, \quad 
\Delta \Psi_{i}+(k+ \|\nabla \Psi_{0} \|^{2})\Psi_{i}=0,\quad i=1,2,\dots, n+1.
\end{equation}
\end{corollary}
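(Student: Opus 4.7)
My plan is to deduce this corollary as a direct specialization of Proposition \ref{lemma11} to the constant admissible function $r\equiv 1$, which corresponds to the static Einstein spacetime $Q(1)=\R\times \S^n\subset \L^{n+2}$. The whole content of the proof is a substitution; the equivalence in both directions is already encoded in the ``if and only if'' of Proposition \ref{lemma11}.

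First I would verify that $r\equiv 1$ is admissible in the sense of (\ref{admissible}), which is immediate since $1>0$ and $|r'|=0<1$. Next I would evaluate the data appearing in Proposition \ref{lemma11} along the immersion: one has $r(\Psi_0)=1$, $r'(\Psi_0)=0$, and $r''(\Psi_0)=0$. Plugging these into the expression (\ref{110322B}) yields
$$q_{\Psi_0}=\frac{k-(0+0-1)\,\|\nabla \Psi_0\|^2}{1\cdot(1-0)}=k+\|\nabla \Psi_0\|^2,$$
and the vector field $\mathbf{P}$ of (\ref{110322A}) collapses to $\mathbf{P}=(0,\Psi_1,\dots,\Psi_{n+1})$, because its zeroth entry is $r(\Psi_0)\,r'(\Psi_0)=0$.

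Finally I would read off the single vector equation $\Delta\Psi+q_{\Psi_0}\mathbf{P}=0$ of Proposition \ref{lemma11} coordinate by coordinate. The zeroth slot yields $\Delta\Psi_0=0$ (since the zeroth component of $\mathbf{P}$ vanishes), while the remaining slots $i=1,\dots,n+1$ give $\Delta\Psi_i+(k+\|\nabla\Psi_0\|^2)\Psi_i=0$, which is precisely the system (\ref{case Q(1)}). Since Proposition \ref{lemma11} is an equivalence, both directions of the corollary follow at once. No step here presents a real obstacle: the entire point of the corollary is that the general formula of Proposition \ref{lemma11} simplifies particularly cleanly when the warping data is constant, so the only thing to be careful about is not losing a sign or coefficient in the substitution.
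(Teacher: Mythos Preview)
Your proof is correct and follows exactly the paper's approach: the corollary is stated as a direct consequence of Proposition~\ref{lemma11}, and your substitution $r\equiv 1$ (with $r'=r''=0$, $q_{\Psi_0}=k+\|\nabla\Psi_0\|^2$, and $\mathbf{P}=(0,\Psi_1,\dots,\Psi_{n+1})$) is precisely the specialization that yields the system~(\ref{case Q(1)}).
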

\noindent As an immediate consequence, the only compact stationary spacelike submanifolds in $Q(1)$ are the stationary submanifolds of a slice $\{t_0\}\times \S^{n}\equiv \S^{n}$.
\vspace{0.15mm}

\begin{proposition}
Given a stationary spacelike immersion $\Psi\colon M^k \to Q(r)$, if the function 
$r(\Psi_{0})^2$ on $M^k$ attains a local maximum value at $x\in M^k$ then $q_{\Psi_{0}}(x)>0$. 
\end{proposition}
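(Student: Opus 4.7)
The plan is to exploit the local max condition through the auxiliary function $\phi := r(\Psi_0)^2 \in C^\infty(M^k)$, whose Laplacian can be expressed in terms of $q_{\Psi_0}$ by invoking the stationarity of $\Psi$ via Proposition \ref{lemma11}.

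First I would note that the denominator $r(\Psi_0)^2(1-r'(\Psi_0)^2)$ appearing in (\ref{110322B}) is strictly positive, because $r$ is admissible. Consequently, $q_{\Psi_0}(x)>0$ is equivalent to the positivity of the numerator, and a convenient way to rewrite the defining identity is
\begin{equation*}
q_{\Psi_0}\,r(\Psi_0)^2(1-r'(\Psi_0)^2)
= k+\|\nabla\Psi_0\|^2 - \bigl(r(\Psi_0)r''(\Psi_0)+r'(\Psi_0)^2\bigr)\|\nabla\Psi_0\|^2.
\end{equation*}

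Next I would compute the Laplacian of $\phi$. Using the chain rule,
\begin{equation*}
\Delta\phi = 2\bigl(r'(\Psi_0)^2+r(\Psi_0)r''(\Psi_0)\bigr)\|\nabla\Psi_0\|^2 + 2\,r(\Psi_0)r'(\Psi_0)\,\Delta\Psi_0,
\end{equation*}
and the stationarity of $\Psi$ together with Proposition \ref{lemma11} (see Remark \ref{sabado}) yields $\Delta\Psi_0 = -q_{\Psi_0}\,r(\Psi_0)r'(\Psi_0)$. Substituting,
\begin{equation*}
\Delta\phi = 2\bigl(r'(\Psi_0)^2+r(\Psi_0)r''(\Psi_0)\bigr)\|\nabla\Psi_0\|^2 - 2\,r(\Psi_0)^2 r'(\Psi_0)^2\,q_{\Psi_0}.
\end{equation*}

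Since $x$ is a local maximum of $\phi$, the Hessian comparison gives $\Delta\phi(x)\leq 0$, i.e.
\begin{equation*}
\bigl(r'(\Psi_0)^2+r(\Psi_0)r''(\Psi_0)\bigr)\|\nabla\Psi_0\|^2 \;\leq\; r(\Psi_0)^2 r'(\Psi_0)^2\, q_{\Psi_0}(x).
\end{equation*}
Combining this with the reformulation of $q_{\Psi_0}$ above, the term $(r r''+r'^2)\|\nabla\Psi_0\|^2$ gets absorbed and I obtain
\begin{equation*}
q_{\Psi_0}(x)\,r(\Psi_0)^2(1-r'(\Psi_0)^2) + q_{\Psi_0}(x)\,r(\Psi_0)^2 r'(\Psi_0)^2 \;\geq\; k+\|\nabla\Psi_0(x)\|^2,
\end{equation*}
that is, $q_{\Psi_0}(x)\,r(\Psi_0(x))^2\geq k+\|\nabla\Psi_0(x)\|^2>0$, from which the conclusion $q_{\Psi_0}(x)>0$ follows at once. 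The only subtlety, and the step that requires care, is keeping track of signs when collecting the terms containing $q_{\Psi_0}$ on the same side; no case analysis on whether $r'(\Psi_0(x))$ or $\nabla\Psi_0(x)$ vanishes is needed thanks to this unified manipulation.
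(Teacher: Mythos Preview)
Your argument is correct and rests on the same idea as the paper: compute $\Delta\phi$ for $\phi=r(\Psi_0)^2$ and use $\Delta\phi(x)\le 0$ at the local maximum. The difference is purely in how $\Delta\phi$ is evaluated. You treat $\phi$ as a composition with $\Psi_0$ and invoke the first equation in Remark~\ref{sabado}, $\Delta\Psi_0=-q_{\Psi_0}\,r(\Psi_0)r'(\Psi_0)$, then combine with the defining identity for $q_{\Psi_0}$ to reach $q_{\Psi_0}(x)\,r(\Psi_0(x))^2\ge k+\|\nabla\Psi_0(x)\|^2$. The paper instead writes $\phi=\sum_{j=1}^{n+1}\Psi_j^2$ and uses the remaining equations $\Delta\Psi_j=-q_{\Psi_0}\Psi_j$, obtaining in one line
\[
\tfrac12\,\Delta\phi+q_{\Psi_0}\,r(\Psi_0)^2=\sum_{j=1}^{n+1}\|\nabla\Psi_j\|^2>0.
\]
Since the immersion is isometric and spacelike, $\sum_{j\ge 1}\|\nabla\Psi_j\|^2=k+\|\nabla\Psi_0\|^2$, so the two computations literally coincide; the paper's version just bypasses the algebraic substitution you perform at the end.
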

\begin{proof}
In fact, this follows from $\sum_{j=1}^{n+1}\Psi_j^2=r(\Psi_{0})^2$ that gives 
$$
\frac{1}{2}\,\Delta r(\Psi_{0})^2+q_{\Psi_{0}}r(\Psi_{0})^2=\sum_{j=1}^{n+1}\|\nabla \Psi_j\|^2>0,
$$
using Remark \ref{sabado}.
\end{proof}

\begin{corollary}\label{130322A}
Let $\Psi\colon M^k \to Q(r)$ be a stationary  compact spacelike immersion. Assume $q_{\Psi_{0}}>0$ and $r'\leq 0$ or $r'\geq 0$. Then, $\Psi\colon M^k \to Q(r)$ factors through a slice $\Psi_{0}=\mathrm{cte}$ with $r'(\Psi_{0})=0.$ Therefore, $q_{\psi_0}$ is  a positive constant and  $\Psi\colon M^k \to Q(r)$ realizes a stationary immersion in a totally geodesic slice of $Q(r)$, which is isometric to an $n$-dimensional round sphere of radius $r(\Psi_{0})$.  In particular, there is no compact stationary spacelike submanifold in $Q(r)$ which is contained in a slab where $r'<0$ or $r'>0.$
\end{corollary}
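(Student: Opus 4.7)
The plan is to use the first equation from Remark \ref{sabado}, namely
$$
\Delta \Psi_{0} + q_{\Psi_{0}}\, r(\Psi_{0})\, r'(\Psi_{0}) = 0,
$$
as the central relation, and exploit compactness of $M^k$ together with the sign hypothesis on $r'$ to force $\Psi_0$ to be constant.

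First, integrating this equation over the compact Riemannian manifold $M^k$ and applying the divergence theorem yields
$$
\int_{M^k} q_{\Psi_{0}}\, r(\Psi_{0})\, r'(\Psi_{0})\, dV = 0.
$$
Under the assumptions $q_{\Psi_0}>0$, $r>0$, and $r'$ of constant sign along $\Psi_0$, the integrand does not change sign, so it must vanish pointwise. Because $q_{\Psi_0}$ and $r(\Psi_0)$ are strictly positive, this forces $r'(\Psi_0)\equiv 0$ on $M^k$. Plugging back, $\Delta \Psi_0\equiv 0$, and since $M^k$ is compact without boundary, any harmonic function is constant; hence $\Psi_0=t_0$ for some $t_0\in J$ with $r'(t_0)=0$.

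Next I would observe that the image of $\Psi$ lies in the slice $\{t_0\}\times \S^n$, which, by the discussion following equation (\ref{closed_conformal_vector_field_2}) concerning the leaves of the distribution $\mathcal{D}_r$, is totally geodesic in $Q(r)$ precisely because $r'(t_0)=0$; moreover this slice inherits from $Q(r)$ the metric of an $n$-dimensional round sphere of radius $r(t_0)$. Since $\Psi$ factors through a totally geodesic submanifold, its mean curvature vector as a map into the slice coincides with that as a map into $Q(r)$, so $\Psi$ is stationary in the slice. Evaluating the formula (\ref{110322B}) with $\nabla \Psi_0=0$ and $r'(\Psi_0)=0$ gives immediately $q_{\Psi_0}=k/r(t_0)^2$, a positive constant. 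Finally, if $r'$ were strictly of constant sign on some slab $J_0\subset J$, then no value $t_0\in J_0$ satisfies $r'(t_0)=0$; the preceding argument then rules out any compact stationary spacelike submanifold whose image lies in that slab, yielding the last assertion.

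The argument is short and the only subtle point is the passage from the integral identity to the pointwise vanishing of $r'(\Psi_0)$, which requires the constant sign hypothesis on $r'$ together with the strict positivity of $q_{\Psi_0}$ and $r$; the rest is a clean application of the divergence theorem and of the totally geodesic character of the critical slices identified in Section 2.
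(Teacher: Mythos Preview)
Your argument is correct and follows essentially the same route as the paper: integrate the first equation of Remark~\ref{sabado}, use the sign hypotheses to force $r'(\Psi_0)\equiv 0$, then harmonicity plus compactness gives $\Psi_0=\mathrm{const}$. The only cosmetic difference is in the last step: you invoke the totally geodesic character of the slice to transfer stationarity directly, whereas the paper writes out the reduced system $\Delta\Psi_i+\frac{k}{r(\Psi_0)^2}\Psi_i=0$ and cites Takahashi's theorem; both conclusions are equivalent here.
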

\begin{proof}
From Remark \ref{sabado},  we have
$$
\int_{M^k}q_{\Psi_{0}}r(\Psi_{0})r'(\Psi_{0})d\mu_{g}=0,
$$
where $d\mu_{g}$ denotes the canonical measure associated to the induced metric.
Therefore,  we get $r'(\Psi_{0})=0$ and  Remark \ref{sabado} implies that $\Delta \Psi_{0}=0$. The compactness of $M^k$ shows that $\Psi_{0}=\mathrm{cte}$, and $r'(\Psi_{0})=0$ implies that the corresponding slice is totally geodesic in $Q(r)$. Now, equation (\ref{120322A}) reduces to
$$\Delta \Psi_{i}+\frac{k}{r(\Psi_{0})^2}\Psi_{i}=0,\quad i=1,2,\dots, n+1,$$
and now we call the aforementioned Takahashi result \cite[Thm. 3]{Takahashi} to end the proof.
\end{proof}

In order to provide a physical interpretation to the assumptions in Corollary \ref{130322A}, let us
recall that for a given reference frame $U$ in a spacetime $\overline M$ in terminology of \cite[Def. 2.3.1]{SW}, the observers in $U$ are {\it spreading out} (resp.{\it coming together}) if div$(U)>0$ (resp. div$(U)<0$) \cite[p. 58]{SW}. Thus, for the observers in $\overline M$ their universe is {\it expanding} (resp. {\it contracting}). In the case $\overline M = I\times_{f} \mathbb{S}^{n}$ and $U=\partial_t$, the co-moving reference frame, we have div$(\partial_t)=n\,f'/f$. Therefore, the spherical RW spacetime $I\times_{f} \mathbb{S}^{n}$ is expanding (resp. contracting) (for co-moving observers) if $f'(t)>0$ (resp. $f'(t)<0$) for all $t\in I$.  

\vspace{1mm}

On the other hand, a spacetime $\overline M$ obeys the {\it Null Convergent Condition} if its Ricci tensor satisfies $\overline{\mathrm{Ric}}(X,X)\geq 0$ for all null tangent vector $X$. This assumption is a necessary mathematical condition that holds from the physical fact that gravity attracts on average. Moreover, it also holds that  if the spacetime obeys the Einstein equation (with zero cosmological constant) for suitable stress-energy tensors. In the case $\overline M = I\times_{f} \mathbb{S}^{n}$,  the 
Null Convergence Condition holds if and only if 
\begin{equation}\label{NCC}
f^2(\log f)''\leq 1,
\end{equation}
(see \cite{ARS}, for instance).  

\vspace{1mm}

Now, take into account, one more time, that the spherical Robertson-Walker spacetime $I\times_{f}\S^{n}$  is isometric to $Q(r)$ by means of (\ref{isometric_embedding}). Consequently, from equations in \eqref{RWEQ}, we get
$$
r''(t)r(t)+r'(t)^2-1=\frac{f''(s)f(s)-f'(s)^2-1}{(1+f'(s)^2)^2},
$$
for $h(s)=t$, and the Null Convergence Condition implies that $q_{\Psi_{0}}\geq 0$ holds for every stationary spacelike immersion in $Q(r)$. Moreover, we would like to point out that   $\overline M = I\times_{f} \mathbb{S}^{n}$ is Einstein if and only if
$$
r''(t)r(t)+r'(t)^2-1=\frac{f''(s)f(s)-f'(s)^2-1}{(1+f'(s)^2)^2}=0.
$$
(see \cite{ARS1997}). But, $I\times_f \S^n$ is Einstein if and only if it has (positive) sectional curvature \cite[Table]{ARS1997}. Note that $Q(r)$ must be totally umbilical in $\L ^{n+2}$ (see Remark \ref{2 de mayo}).

\vspace{1mm}

Summarizing, when $Q(r)$ satisfies the Null Convergence Condition, then for every $k$-dimensional stationary spacelike immersion $\Psi$ in $Q(r)$, we have,
$$
q_{\Psi_{0}}\geq k\,\alpha(\Psi_{0})^2
$$
with equality whenever $Q(r)$ has (positive) constant sectional curvature (hence, $Q(r)$ is an open portion of a De Sitter spacetime).

\vspace{1mm}

As a consequence of previous discussion, we have,

\begin{corollary}\label{3105B} There is no stationary compact spacelike submanifold in a expanding or contracting spherical RW spacetime $I\times_{f} \mathbb{S}^{n}$ satisfying the Null Convergence Condition. 
\end{corollary}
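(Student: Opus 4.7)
My plan is to assemble the corollary from the pieces that the paper has already put in place: the identification $I\times_f \mathbb{S}^n \cong Q(r)$, the translation of the physical hypotheses into conditions on $r$, and the already-proved Corollary \ref{130322A}.

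First, I would identify the given spherical RW spacetime $I\times_f \mathbb{S}^n$ with the Lorentzian hypersurface $Q(r)\subset \L^{n+2}$ through the isometric embedding $\psi$, so that any spacelike submanifold of $I\times_f \mathbb{S}^n$ is a spacelike submanifold of $Q(r)$. The hypothesis that the spacetime is expanding or contracting for the co-moving reference frame $\partial_t$ means, by the formula $\mathrm{div}(\partial_t)=n f'/f$ and the positivity of $f$, that $f'>0$ on all of $I$ or $f'<0$ on all of $I$. By the first relation in (\ref{RWEQ}), $r'(t)=f'(s)/\sqrt{1+f'(s)^2}$ has the same sign as $f'(s)$, so we get $r'>0$ everywhere on $J$ or $r'<0$ everywhere on $J$.

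Next, I would translate the Null Convergence Condition $f^2(\log f)''\leq 1$, equivalently $f''f-(f')^2-1\leq 0$, using the identity recalled just before the statement,
\begin{equation*}
r''(t)r(t)+r'(t)^2-1=\frac{f''(s)f(s)-f'(s)^2-1}{(1+f'(s)^2)^2},
\end{equation*}
to conclude that $r''(t)r(t)+r'(t)^2-1\leq 0$ on $J$. Plugging this into the definition (\ref{110322B}) of $q_{\Psi_0}$ and using that $\alpha(t)=-1/(r(t)\sqrt{1-r'(t)^2})$ never vanishes, we obtain the strict inequality
\begin{equation*}
q_{\Psi_0}\;\geq\; k\,\alpha(\Psi_0)^2\;>\;0
\end{equation*}
at every point of any stationary spacelike immersion in $Q(r)$, as pointed out in the discussion preceding the corollary.

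Finally, with the entire ambient spacetime lying in a slab where $r'>0$ (or in a slab where $r'<0$), and with $q_{\Psi_0}>0$ on the submanifold, the last sentence of Corollary \ref{130322A} applies verbatim and rules out the existence of a compact stationary spacelike submanifold. I do not foresee a genuine obstacle: the only point requiring care is to make the sign translation between the physical conditions ($f'$) and the analytic conditions ($r'$) explicit, and to ensure the NCC really yields $q_{\Psi_0}>0$ strictly rather than merely $\geq 0$, which is guaranteed by the non-vanishing of $\alpha$.
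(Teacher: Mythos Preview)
Your proposal is correct and follows essentially the same route as the paper's proof: translate expanding/contracting into a strict sign for $r'$ via (\ref{RWEQ}), translate the Null Convergence Condition into $q_{\Psi_0}\geq k\,\alpha(\Psi_0)^2>0$ via the identity for $r''r+r'^{2}-1$, and then invoke Corollary \ref{130322A} to reach a contradiction (the paper phrases it as obtaining $r'(\Psi_0)=0$ against the strict sign of $r'$, while you cite the ``In particular'' clause directly, which amounts to the same thing).
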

\begin{proof}Let us argue by contradiction. Suppose there exists  a stationary compact spacelike immersion satisfying in a such t spherical RW spacetime $I\times_{f} \mathbb{S}^{n}$. Now Corollary \ref{130322A} can be applied since  $f'$ is signed and $f''(s)f(s)- f'(s)^2\leq 1$, therefore, we should have $f'(\Psi_0)=0$, which is a contradiction. 
\end{proof}

\smallskip
We finish the paper with the following result that completes the main result as announced in the end of introduction.

\begin{theorem}\label{010222}
Let  $r$ be an admisible function and $\Psi\colon M^k \to \L^{n+2}$  any spacelike immersion with $q_{\Psi_{0}}>0$.
 If $\Psi$ satisfies $(\ref{120322A})$, then $\Psi$ realizes a stationary spacelike immersion in $Q(r)$.
\end{theorem}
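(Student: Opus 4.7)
The plan is to argue in three stages: first show that $\mathbf{P}$ is normal to $M^k$ in $\L^{n+2}$; second, deduce that $\Psi(M^k)\subset Q(r)$; and third, conclude that $\Psi\colon M^k\to Q(r)$ has vanishing mean curvature. Throughout, I set $\varphi:=\sum_{i=1}^{n+1}\Psi_i^2-r(\Psi_0)^2\in C^{\infty}(M^k)$, so that $\Psi(M^k)\subset Q(r)$ is equivalent to $\varphi\equiv 0$, and assume $M^k$ is connected (otherwise argue componentwise).

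For the first stage, the Beltrami formula (\ref{Beltrami}) rewrites the hypothesis as $k\mathbf{H}=-q_{\Psi_0}\mathbf{P}$. Since $\mathbf{H}$ is normal to $M^k$ in $\L^{n+2}$ and $q_{\Psi_0}>0$, the vector field $\mathbf{P}$ along $\Psi$ is also normal to $M^k$. Pairing with an arbitrary $X\in\mathfrak{X}(M^k)$ and using the explicit form (\ref{110322A}) together with the Lorentzian metric (\ref{metric}), a direct computation gives
\begin{equation*}
\langle\mathbf{P},d\Psi(X)\rangle = -r(\Psi_0)r'(\Psi_0)\,X(\Psi_0)+\sum_{i=1}^{n+1}\Psi_i\,X(\Psi_i)=\tfrac{1}{2}X(\varphi).
\end{equation*}
Hence $X(\varphi)=0$ for every tangent vector field, so $\varphi$ is constant on $M^k$.

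For the second stage, I compute $\Delta\varphi$ from the component PDEs in Remark \ref{sabado}. The summand $\sum_i\Psi_i^2$ contributes $-2q_{\Psi_0}\sum_i\Psi_i^2+2\sum_i\|\nabla\Psi_i\|^2$, while the chain rule together with $\Delta\Psi_0=-q_{\Psi_0}r(\Psi_0)r'(\Psi_0)$ gives $\Delta(r(\Psi_0)^2)=2\bigl(r'(\Psi_0)^2+r(\Psi_0)r''(\Psi_0)\bigr)\|\nabla\Psi_0\|^2-2q_{\Psi_0}r(\Psi_0)^2r'(\Psi_0)^2$. Tracing the isometric relation $\langle d\Psi(\cdot),d\Psi(\cdot)\rangle=g$ in an orthonormal frame yields $\sum_{i=1}^{n+1}\|\nabla\Psi_i\|^2=k+\|\nabla\Psi_0\|^2$. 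Substituting everything and writing $\sum_i\Psi_i^2=\varphi+r(\Psi_0)^2$, the coefficient $q_{\Psi_0}$ prescribed by (\ref{110322B}) is exactly what is needed for the constant term $2k$ and every $\|\nabla\Psi_0\|^2$-contribution to cancel, leaving the clean identity
\begin{equation*}
\Delta\varphi+2\,q_{\Psi_0}\,\varphi=0.
\end{equation*}
Since the first stage gives that $\varphi$ is constant (so $\Delta\varphi=0$) and $q_{\Psi_0}>0$, this forces $\varphi\equiv 0$, i.e., $\Psi(M^k)\subset Q(r)$.

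For the third stage, comparing (\ref{110322A}) with (\ref{normal}) on $Q(r)$ gives $\mathbf{P}=r(\Psi_0)\sqrt{1-r'(\Psi_0)^2}\,N$, so $\mathbf{P}$ is everywhere proportional to the unit normal $N$ of $Q(r)$ in $\L^{n+2}$. Therefore $\mathbf{H}=-(q_{\Psi_0}/k)\,\mathbf{P}$ lies entirely in the $N$-direction, and the orthogonal decomposition (\ref{mean_curvatures_relation}) immediately yields $\widetilde{\mathbf{H}}=0$, i.e., $\Psi$ is stationary in $Q(r)$. The principal obstacle is the algebraic cancellation in the second stage: the computation involves several terms in $r$, $r'$, $r''$, $\|\nabla\Psi_0\|^2$, and $q_{\Psi_0}$, and one must verify carefully that the specific shape of $q_{\Psi_0}$ dictated by (\ref{110322B}) is precisely what is needed for all extraneous contributions to cancel, producing the eigenvalue-type equation for $\varphi$; once that identity is in hand, the first and third stages are essentially immediate.
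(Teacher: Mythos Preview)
Your proof is correct and follows a genuinely different route from the paper's. The paper also begins by observing that $\mathbf{P}$ is normal, but then proceeds via the Weingarten machinery: it computes the shape operator $A_{\mathbf{P}}$ of $M^k$ in $\L^{n+2}$ with respect to the normal field $\mathbf{P}$, obtains $\mathrm{trace}(A_{\mathbf{P}})=-q_{\Psi_0}/\alpha(\Psi_0)^2$, and then compares $\mathrm{trace}(A_{\mathbf{H}})=k\|\mathbf{H}\|^2$ with $\|\mathbf{H}\|^2=(q_{\Psi_0}^2/k^2)\|\mathbf{P}\|^2$ to conclude $\|\mathbf{P}\|^2=1/\alpha(\Psi_0)^2$, which is exactly $\sum_j\Psi_j^2=r(\Psi_0)^2$. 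Your argument replaces this second-fundamental-form computation with two more elementary observations: the pairing identity $\langle\mathbf{P},d\Psi(X)\rangle=\tfrac{1}{2}X(\varphi)$, which makes ``$\mathbf{P}$ normal'' literally equivalent to ``$\varphi$ constant'', and a direct Laplacian computation on $\varphi$ using the scalar component equations, which produces the clean eigenvalue-type identity $\Delta\varphi+2q_{\Psi_0}\varphi=0$. Your route is somewhat more self-contained (no shape operators needed) and makes transparent \emph{why} the particular form of $q_{\Psi_0}$ in (\ref{110322B}) is forced; the paper's route, on the other hand, is more geometric and makes the connection with the extrinsic invariants $\alpha$, $\|\mathbf{P}\|$ explicit. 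For the final step you could simply invoke Proposition~\ref{lemma11} once $\Psi(M^k)\subset Q(r)$ is known, but your direct argument via $\mathbf{P}\parallel N$ and the decomposition (\ref{mean_curvatures_relation}) is equally valid.
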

\begin{proof}  
Taking into account  Proposition \ref{lemma11}, we only need to  show that $\Psi(M^{k})\subset Q(r)$.
In fact, from (\ref{120322A}) and (\ref{Beltrami}), we have
\begin{equation}\label{110322B1}
k\mathbf{H}+ q_{\Psi_{0}}\mathbf{P}=0,
\end{equation}
that implies that vector field $\mathbf{P}$  along the spacelike immersion $\Psi\colon M^k \to \L^{n+2}$ (\ref{110322A}) is normal  everywhere. 

\vspace{1mm}

\noindent Now, the Weingarten formula  for $\Psi\colon M^k \to \L^{n+2}$ and the normal vector field $\mathbf{P}$  gives
\begin{equation}\label{060822Aa}
\nabla^{0}_{v}\mathbf{P}=
-A_{\mathbf{P}}(v)+ \nabla^{\perp}_{v}\mathbf{P}.
\end{equation}
We compute the left hand side of (\ref{060822Aa}) for every $v=(v_0,v_1,...,v_{n+1})\in T_{x}M^k$, obtaining
$$
\nabla^{0}_{v}\mathbf{P}=\Big(\,\Big[r'(\Psi_{0}(x))^2   +r(\Psi_{0}(x))r''(\Psi_{0}(x))\Big]v_{0}, v_{1}, \dots , v_{n+1}\Big)
$$
\begin{equation}\label{060822A}
\hspace*{13mm}=v+v_{0}\Big[r'(\Psi_{0}(x))^2   +r(\Psi_{0}(x))r''(\Psi_{0}(x))-1\Big]\,\frac{\partial}{\partial t}{\Big|_{\Psi(x)}}.
\end{equation}
Now, recall that for every $a\in \L^{n+2}$, the vector field $a^{\top}=\nabla \langle \Psi, a\rangle \in \mathfrak{X}(M^k)$. Here the superscript $\top$ denotes the tangent part of $a\in \L^{n+2}$ along the immersion $\Psi\colon M^k \to \L^{n+2}$.

\vspace{1mm}

In particular, we get $$\Big(\frac{\partial}{\partial t}{\Big|_{\Psi(x)}}
\Big)^{\top}=\nabla \langle \Psi, e_{0}\rangle=- \nabla\Psi_{0},$$
and therefore, from equations (\ref{060822A}) and  (\ref{060822Aa}) we have
\begin{equation}\label{Tomas1}
A_{\mathbf{P}}=-\mathrm{Id}+\Big[r'(\Psi_{0})^2   +r(\Psi_{0})r''(\Psi_{0})-1\Big]\,d\Psi_{0}\otimes\nabla \Psi_{0}\,.
\end{equation}
Hence,  we can directly compute
\begin{equation}\label{Tomas4}
\mathrm{trace}(A_{\mathbf{P}})=-k+\Big[r'(\Psi_{0})^2   +r(\Psi_{0})r''(\Psi_{0})-1\Big]\| \nabla\Psi_{0}\|^2=- \frac{q_{\Psi_{0}}}{\alpha(\Psi_{0})^2}\,,
\end{equation}
and by means of (\ref{110322B1}), we get
\begin{equation}\label{281122A}
\mathrm{trace}(A_{\mathbf{H}})= k \| \mathbf{H}\|^{2}=\frac{q^{2}_{\Psi_{0}}}{k\,\alpha(\Psi_{0})^2}\,.
\end{equation}
On the other hand, formula (\ref{110322B1}) also gives that
\begin{equation}\label{Tomas5}
\| \mathbf{H}\|^{2}=\frac{q_{\psi_{0}}^2}{k^2}\| \mathbf{P}\|^{2}=\frac{q^{2}_{\Psi_{0}}}{k^2}\Big( -r(\Psi_{0})^2 r'(\Psi_{0})^2+ \sum_{j=1}^{n+1}\Psi_{j}^2\Big)\,.
\end{equation}
The above two formulae (\ref{281122A}) and (\ref{Tomas5})  imply 
$$
 -r(\Psi_{0})^2 r'(\Psi_{0})^2+ \sum_{j=1}^{n+1}\Psi_{j}^2=\frac{1}{\alpha(\Psi_{0})^2},
$$
which ends the proof. 
\end{proof}

\vspace{6mm}

\noindent Danilo Ferreira and Eraldo A. Lima Jr\\
Departamento de Matemática,\\ Universidade Federal da Paraíba,\\ 58051-900 João Pessoa, PB, Brazil\\
danilodfs.math@gmail.com\\
eraldo.lima@academico.ufpb.br 

\vspace{3mm}

\noindent Francisco J. Palomo\\
Departamento de Matemática Aplicada\\
Universidad de Málaga, 29071 Málaga, Spain\\
fpalomo@uma.es

\vspace{3mm}

 \noindent Alfonso Romero\\
 Departamento de Geometría y Topología,\\ Universidad de Granada, 18071 Granada, Spain\\
aromero@ugr.es

\end{document}